\documentclass[11pt]{amsart}
\usepackage{amsmath}
\usepackage{amsthm}
\usepackage{mathdots}
\usepackage{stmaryrd}
\usepackage[initials, lite]{amsrefs}
\usepackage{color}
\usepackage{tabularx}
\usepackage{longtable}
\usepackage{booktabs}
\usepackage{multirow}
\usepackage{array}
\usepackage{amsfonts} 
\usepackage{paralist}
\usepackage{aliascnt}
\usepackage{amscd}
\usepackage{blkarray}
\usepackage{mathbbol}
\usepackage{tikz, tikz-cd}
\usepackage{amsmath,amscd,amsthm,amssymb,amsxtra,latexsym, epsfig,epic,graphics}
\usepackage{tikz}
\usetikzlibrary{matrix}
\usetikzlibrary{arrows,calc}
\allowdisplaybreaks

\BibSpec{collection.article}{%
	+{}  {\PrintAuthors}                {author}
	+{,} { \textit}                     {title}
	+{.} { }                            {part}
	+{:} { \textit}                     {subtitle}
	+{,} { \PrintContributions}         {contribution}
	+{,} { \PrintConference}            {conference}
	+{}  {\PrintBook}                   {book}
	+{,} { }                            {booktitle}
	+{,} { }                            {series}
	+{, vol.} { }                            {volume}
	+{,} { }                            {publisher}
	+{,} { \PrintDateB}                 {date}
	+{,} { pp.~}                        {pages}
	+{,} { }                            {status}
	+{,} { \PrintDOI}                   {doi}
	+{,} { available at \eprint}        {eprint}
	+{}  { \parenthesize}               {language}
	+{}  { \PrintTranslation}           {translation}
	+{;} { \PrintReprint}               {reprint}
	+{.} { }                            {note}
	+{.} {}                             {transition}
	+{}  {\SentenceSpace \PrintReviews} {review}
}
\AtBeginDocument{%
	\def\MR#1{}
}

\newcommand{\m}{\mathfrak{m}}

\newcommand{\reg}{\normalfont\text{reg}}

\newtheorem{theorem}{Theorem}[section]

\newaliascnt{headcor}{headthm}

\aliascntresetthe{headcor}

\newaliascnt{headconj}{headthm}

\aliascntresetthe{headconj}

\newaliascnt{corollary}{theorem}
\newtheorem{corollary}[corollary]{Corollary}
\aliascntresetthe{corollary}

\def\min{\operatorname{min}}

\def\reg{\operatorname{reg}}

\def\dim{\operatorname{dim}}

\def\htt{\operatorname{ht}}

\def\deg{\operatorname{deg}}

\def\m{\mathfrak m}

\newaliascnt{lemma}{theorem}

\aliascntresetthe{lemma}

\newaliascnt{conjecture}{theorem}

\aliascntresetthe{conjecture}

\newaliascnt{proposition}{theorem}
\newtheorem{proposition}[proposition]{Proposition}
\aliascntresetthe{proposition}

\theoremstyle{definition}
\newaliascnt{definition}{theorem}

\aliascntresetthe{definition}

\newaliascnt{notation}{theorem}

\aliascntresetthe{notation}

\newaliascnt{example}{theorem}
\newtheorem{example}[example]{Example}
\aliascntresetthe{example}

\newaliascnt{examples}{theorem}

\aliascntresetthe{examples}

\newaliascnt{remark}{theorem}
\newtheorem{remark}[remark]{Remark}
\aliascntresetthe{remark}

\newaliascnt{question}{theorem}

\aliascntresetthe{question}

\newaliascnt{questions}{theorem}

\aliascntresetthe{questions}

\newaliascnt{problem}{theorem}

\aliascntresetthe{problem}

\newaliascnt{construction}{theorem}

\aliascntresetthe{construction}

\newaliascnt{setting}{theorem}
\newtheorem{setting}[setting]{Setting}
\aliascntresetthe{setting}

\newaliascnt{algorithm}{theorem}

\aliascntresetthe{algorithm}

\newaliascnt{observation}{theorem}

\aliascntresetthe{observation}

\newaliascnt{defprop}{theorem}

\aliascntresetthe{defprop}

\DeclareFontFamily{OT1}{pzc}{}
\DeclareFontShape{OT1}{pzc}{m}{it}{<-> s * [1.100] pzcmi7t}{}
\DeclareMathAlphabet{\mathchanc}{OT1}{pzc}{m}{it}

\begin{document}

\title[On the number of generators of licci ideals]{On the number of generators of licci ideals}
%\date\today

\subjclass[2020]{ Primary 13C40, 14M06, 13E15, 13F55; Secondary 13D02, 13P10.}

\keywords{Linkage, Licci ideals, minimal number of generators, Castelnuovo-Mumford regularity, free resolutions, deviation, length, Loewy length, initial ideal.}

\thanks{This material is based upon work supported by the National Science Foundation under Grant No. DMS-1928930 and by the Alfred P. Sloan Foundation under grant G-2021-16778, while the authors were in residence at the Simons Laufer Mathematical Sciences Institute (formerly MSRI) in Berkeley, California, during the Spring 2024 semester. The second and third authors were partially supported by NSF grants DMS-2502707 and DMS-2502706, respectively.}

\author{Craig Huneke}
\address{Craig Huneke, Department of  Mathematics, University of Virginia, Charlottesville, VA 22903} \email{huneke@virginia.edu}

\author{Claudia Polini}
\address{Claudia Polini, Department of Mathematics, University of Notre Dame, Notre Dame, Indiana 46556} \email{cpolini@nd.edu}

\author{Bernd Ulrich}
\address{Bernd Ulrich, Department of Mathematics, Purdue University, West Lafayette, Indiana 47907} \email{bulrich@purdue.edu}
\begin{abstract}  
We prove a conjecture on the minimal number of generators of licci zero-dimensional ideals that are either monomial or have small Loewy colength.
\end{abstract}
 \maketitle
%\tableofcontents
\section{Introduction}

Linkage (or liaison) is a classical method for classifying and studying ideals and subvarieties. Originating in the nineteenth century, the subject continues to be 
%actively 
investigated 
from many points of view \cites{KTY2013, Chong, RT19, RTY2020, ERT2020, SW21, RT2023, CVW2019, GNW2024, GNW2412, GNW25, CGNW25, CGNW2503, JRS, MMM, HPU26-1, HPU26-2, HPU26-4}. Two proper ideals $I$ and $J$ of a Gorenstein ring $R$ are said to be {\it linked} if there exists a complete intersection ideal $\mathfrak{a}$ such that
\[
J=\mathfrak{a}:I \qquad \text{and} \qquad I=\mathfrak{a}:J .
\]
By iterating this construction one obtains the {\it linkage class} of an unmixed ideal $I$, that is, the collection of all ideals that can be obtained from $I$ through a finite sequence of links. Of particular interest are the ideals that belong to the linkage class of a complete intersection; these are called {\it licci} ideals. In this paper we study the number of generators of licci zero-dimensional ideals which are either monomial or of small Loewy colength. In  \cite{HU07}, there is a simple algorithm to decide whether a zero-dimensional monomial ideal is licci. This is one of the few cases in the literature where one has not only a necessary, but also a sufficient condition for an ideal to be licci. 

We prove conjectures on the minimal number of generators of licci ideals made in \cite{HPU26-1}. 
The conjectures imply, for instance, that squarefree licci monomial ideals are of 
linear type, meaning their Rees algebra and symmetric algebra are isomorphic. Moreover this algebra
is Cohen-Macaulay \cite{HPU26-1}. 

Let $I$ be a homogeneous licci ideal of height $g$ in a polynomial ring $S$ over a 
field. In \cite{HU87} it was shown that the maximal last shift $D$ in a minimal
homogeneous free resolution of $I$ is `large' compared to the initial degree of $I,$
$$D > (g-1) \cdot {\rm indeg}(I).$$
Surprisingly, the last shift $D$ also appears to bound the minimal number 
of generators $\mu(I).$ In fact, \cite{HPU26-1}*{Conjecture 1.2} states that
$$ D \geq \mu(I) $$
or, equivalently, that the \emph{deviation}
$
d(I):=\mu(I)-\mathrm{ht}(I)
$
%the \emph{deviation} of $I$, 
is bounded above by the Castelnuovo-Mumford regularity $\, \reg(S/I);$
%, the Castelnuovo--Mumford regularity of $S/I;$ 
see \cite{HPU26-1}. 

Since the ring $S/I$ is Cohen-Macaulay, its Castelnuovo-Mumford regularity is one less than the generalized Loewy length. Thus, our conjecture allows for a natural generalization to the
local case that, for an $\m$-primary licci ideal in a regular local ring $(R,\m),$
simply states that \cite{HPU26-1}*{Conjecture 1.3}
%\begin{comment}
%Let $I$ be a homogeneous ideal in a polynomial ring over a field, and let $\mu(I)$ denote its minimal number of generators. In \cite{HPU26-1} we posed the conjecture that 
%$\mu(I)$ is bounded above by the maximal last shift in a minimal homogeneous free resolution of $I$. 
%Equivalently, we conjecture that
%$
%d(I)=\mu(I)-\mathrm{ht}(I),
%$
%the \emph{deviation} of $I$, is bounded above by $\reg(S/I)$, the 
%Castelnuovo--Mumford regularity of $S/I$. 
%Notice that licci ideals are perfect; hence the conjecture makes sense in any graded ring. 
%The conjecture can also be generalized to the local case. Indeed, if $\mathfrak m$ denotes the maximal ideal of the ring, then for an $\mathfrak m$-primary licci ideal $I$ the conjecture is equivalent to the statement
%\end{comment}
\[
\m^{d(I)} \not\subset I .
\]

In this paper, we give a proof of this conjecture for both $\m$-primary licci monomial ideals 
(Theorem~\ref{Cmmon}) and
licci ideals containing the cube of the maximal ideal of a regular local ring (Theorem~\ref{mcubed}). We also discuss consequences
of these results for some licci ideals which are not $\m$-primary (Corollary~\ref{Cmon} and Theorem~\ref{Cstrong}).

\vspace{.05cm}

\section{Monomial ideals}
For a polynomial ring $S=k[x_1, \ldots, x_n]$ over a field $k$ we always write $\m$ for its homogeneous maximal ideal $(x_1,\ldots, x_n).$ 
%A monomial in $S$ is an element of the form $x_1^{a_1} \cdots x_n^{a_n}.$ 
%A monomial ideal is an ideal generated by monomials. 
Every $\m$-primary monomial ideal $I$ can be written uniquely in a standard form
$$I=(x_1^{a_1}, \ldots, x_n^{a_n})+ I^{\#},$$
where $I^{\#}$ is generated by monomials that together with $\{x_1^{a_1}, \ldots, x_n^{a_n}\}$ generate $I$ minimally. 

According to \cite{HU07}*{Theorem 2.4}, if 
%$I$ is an $\m$-primary monomial ideal and 
$I^{\#}$ has height at least two, then $I_{\m}$ is not licci in $S_{\m},$ in particular $I$ is not licci. Hence if $I$ is a licci $\m$-primary monomial ideal, we can write $I$ in a standard form as
$$I=(x_1^{a_1}, \ldots, x_n^{a_n})+ x^BK,$$
where $x^B K=I^{\#},$ $\htt K\ge 2,$ and $x^B=x_1^{b_1} \cdots x_n^{b_n} \not=1.$ In \cite{HU07}*{Theorem 2.6}
it was also shown that $I$ is licci if (and only if) $I_{\m}$ is licci.
%If $0\not =K \not=S,$ then the monomial ideal $$I'=(x_1^{a_1-b_1}, \ldots, x_n^{a_n-b_n}) +K$$ is obtained from $I$ by a double link, in particular $I'$ is still a licci $\m$-primary monomial ideal \cite{HU07}*{Lemma 2.5}. 
%Let $$0\lto \bigoplus_j S(-d_{nj}) \lto \ldots \lto \bigoplus_j S(-d_{1j}) \lto S \lto I \lto 0$$
%be a minimal free resolution of $I$. 
%In this section we will show that if $I$ is a licci $\m$-primary monomial ideal then our conjecture on the number generators holds. 
%$\mu(I),$ the minimal number of generators of $I,$ is bounded above by $D_n={\rm max}\{d_{nj}\},$ or equivalently that $d(I)=\mu(I)-n,$ the {\it deviation} of $I,$ is bounded above by $\reg(S/I)$, the {\it Castelnuovo-Mumford regularity} of $S/I.$ This holds if and only if $\m^{d(I)} \not\subset I.$

\begin{setting}\label{S1}
Let $S=k[x_1, \ldots, x_n]$ be a polynomial ring over a field $k,$ and $I$ be a licci $\m$-primary monomial ideal.
%and assume that $I_{\m}$ is licci. 
Write
%According to \cite{HU07}*{Theorem 2.4}, we can write
\[
I=(x_1^{a_1}, \ldots, x_n^{a_n})+ x^B K,
\]
where $x^B K=I^{\#},$ $\htt K\ge 2,$ and $x^B=x_1^{b_1} \cdots x_n^{b_n} \not=1.$  Set
\[
A_i=\sum_{j\ne i}(a_j-1), 
\qquad 
N(I)=\min\{A_1,\ldots,A_n\},
\qquad
A(I)=\sum_{j=1}^n a_j.
\]
\end{setting}

\vspace{.2cm}

\begin{proposition}\label{TN}
With assumptions and notation as in Setting~\ref{S1} one has
\[
d(I)\le N(I).
\]
\end{proposition}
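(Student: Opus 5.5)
We argue by induction on $A(I)$, using the double link from \cite{HU07}. Since $I=(x_1^{a_1},\ldots,x_n^{a_n})+x^BK$ is in standard form, we have $\mu(I)=n+\mu(I^{\#})=n+\mu(K)$, so $d(I)=\mu(K)$. We must therefore show $\mu(K)\le A_i(I)$ for every $i$.

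\emph{Base cases.} If $I^{\#}=0$, then $I$ is a complete intersection and $d(I)=0\le N(I)$.

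If $K=S$, then $d(I)=1$, and we need $A_i\ge 1$, that is, some $a_j\ge 2$ with $j\ne i$.
\begin{itemize}
\item $x^B$ is not a pure power of a variable. A pure power would be redundant or would replace some $x_j^{a_j}$, contradicting the standard form.
\item Hence $x^B$ is divisible by two distinct variables. Since $x^B\notin(x_1^{a_1},\ldots,x_n^{a_n})$, each such variable $x_j$ appears with exponent $b_j\le a_j-1$, so $a_j\ge 2$ for two values of $j$.
\item At least one of these two indices differs from $i$, which gives $A_i\ge1$.
\end{itemize}

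\emph{Inductive step.} Assume $0\ne K\ne S$. By \cite{HU07}*{Lemma 2.5}, $I'=(x_1^{a_1-b_1},\ldots,x_n^{a_n-b_n})+K$ is a licci $\m$-primary monomial ideal obtained from $I$ by a double link. Since $x^B\ne1$, we have $A(I')<A(I)$.

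To compute $d(I')$, put $I'$ in standard form. Let $P$ be the set of indices $j$ such that some minimal generator of $K$ is a pure power $x_j^{c_j}$, and set $p=|P|$.

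First, $c_j<a_j-b_j$ for $j\in P$. Otherwise $x^Bx_j^{c_j}$ would be a multiple of $x_j^{a_j}$, contradicting the minimality of the generators of $I^{\#}$ in $I$. Hence these pure powers replace the corresponding $x_j^{a_j-b_j}$. The remaining minimal generators of $K$ stay minimal in $I'$, since none lies in $(x_j^{a_j-b_j})$; again this follows from the minimality of $x^BK$ in $I$.

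Thus $I'$ has pure-power exponents $a'_j=c_j$ for $j\in P$ and $a'_j=a_j-b_j$ otherwise, and $d(I')=\mu(K)-p$. By induction, $\mu(K)-p\le A_i(I')$. Moreover
\[
A_i(I)-A_i(I')=\sum_{j\ne i}(a_j-a'_j)\ \ge\ \sum_{j\ne i}b_j+\bigl|P\setminus\{i\}\bigr|,
\]
because $a_j-a'_j\ge b_j+1$ for $j\in P$. Therefore it suffices to show
\[
p\le \sum_{j\ne i}b_j+\bigl|P\setminus\{i\}\bigr|.
\]

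\emph{The main obstacle.} This inequality can only fail when $i\in P$ and $b_j=0$ for all $j\ne i$, that is, when $x^B=x_i^{b_i}$ and $x_i^{c_i}\in K$. In that case $x_i^{b_i+c_i}$ would be a minimal generator of $I^{\#}$ that is a pure power. That is impossible in standard form: either it is redundant given $x_i^{a_i}$, or it contradicts the choice of $a_i$. So the bad case never occurs.

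The step requiring the most care is this bookkeeping of how pure powers of $K$ reset the exponents of $I'$, and checking that the count $d(I')=\mu(K)-p$ is exact. Once that is in place, the induction closes and yields $d(I)\le A_i(I)$ for every $i$, hence $d(I)\le N(I)$.
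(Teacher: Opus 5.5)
Your proof is correct. It follows the paper's overall strategy: induction on $A(I)$ along double links, comparing the drop in deviation with the drop in each $A_i$. The difference is which double link you use.

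The paper peels off a single factor of $x_1$, using the one-step link $I^*=(x_1^{a_1-1},x_2^{a_2},\ldots,x_n^{a_n})+\frac{x^B}{x_1}K$ with $b_1=\max\{b_j\}$.
\begin{itemize}
\item The cost is a three-way case split: first $b_1\ge 2$ or $\deg x^B\ge 3$, where the deviation is unchanged; then $x^B=x_1x_j$; then $x^B=x_1$. The maximal choice of $b_1$ is what controls which pure powers can appear.
\item The gain is that this link never degenerates. When $K=S$, the monomial $x^B$ is not a pure power, so $\frac{x^B}{x_1}K\neq S$, and the case $K=S$ is absorbed into the induction.
\end{itemize}

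You instead use the full link $I'=(x_1^{a_1-b_1},\ldots,x_n^{a_n-b_n})+K$, which is the reduction step of the licci algorithm itself.
\begin{itemize}
\item This link collapses to the unit ideal when $K=S$. That is why you need a separate base case for $I^{\#}=(x^B)$, and you handle it correctly: $x^B$ involves two distinct variables, each with exponent $b_j<a_j$.
\item In exchange the bookkeeping is uniform. Your exact count $d(I')=\mu(K)-p$, together with $a_j-a'_j\ge b_j+1$ for $j\in P$, reduces the inductive step to one requirement: $\sum_{j\ne i}b_j\ge 1$ whenever $i\in P$.
\item The only way this can fail is $x^B=x_i^{b_i}$ with $x_i^{c_i}$ a minimal generator of $K$. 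You exclude this because $x_i^{b_i+c_i}$ would then be a pure power in $I$ of exponent less than $a_i$.
\item Neither the maximal choice of $b_1$ nor any case analysis is needed.
\end{itemize}
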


\begin{proof}
We argue by induction on $A(I)\ge n$. If $A(I)=n$, then $I$ is a complete intersection and hence $d(I)=0$, so the claim holds.

Assume now that the statement holds for every licci $\m$-primary monomial ideal $J$ with $A(J)<A(I)$, and let us prove it for $I$. If $I$ is a complete intersection there is nothing to prove, so we may suppose that $K\neq 0$ and $x^B\neq 1$. After a change of variables we may assume that
\[
b_1=\max\{b_j\}\ge 1 .
\]
As in the proof of \cite{HU07}*{Lemma 2.5}, the ideal
\[
I^*=(x_1^{a_1-1},x_2^{a_2}, \ldots, x_n^{a_n})+ \frac{x^B}{x_1}K
\]
is obtained from $I$ by a double link.
%defined by the monomial regular sequences
%\[
%x_1^{a_1}, \ldots, x_n^{a_n}
%\quad\text{and}\quad
%x_1^{a_1-1}, \ldots, x_n^{a_n}.
%\]
In particular, $I^*$ is again a licci $\m$-primary monomial ideal and $A(I^*)<A(I)$; hence the induction hypothesis applies to $I^*$. Thus
$
d(I^*)\le N(I^*),
$ and it suffices to show that 
\begin{equation}\label{EQ Difference} d(I)-d(I^*)\le N(I)-N(I^*).
\end{equation}
Notice that $N(I^*)\le N(I)$ and that no minimal generator of $\frac{x^B}{x_1}K$ is contained in $(x_1^{a_1-1},x_2^{a_2}, \ldots, x_n^{a_n}).$ 

First assume that $b_1\ge 2$ or $\deg x^B\ge 3$. We claim $d(I)=d(I^*),$ which will prove (\ref{EQ Difference}) in this case. It suffices to prove that none of the pure powers $x_1^{a_1-1},x_2^{a_2}, \ldots, x_n^{a_n}$ is contained in $\frac{x^B}{x_1}K.$  If $b_1\ge 2$, then every pure power $x_\ell^{c_\ell}$ in $K$ is multiplied by $x_1$, and $x_1^{a_1-1}\notin \frac{x^B}{x_1}K$ because $x_1^{a_1}\notin x^B K$. Otherwise, if $b_1=1$ and $\deg x^B\ge 3$, then every pure power $x_\ell^{c_\ell}$ in $K$ is multiplied by $x_ix_j$ with $2\le i<j\le n$, so %that $\mu(I)=\mu(I^*),$ thus $d(I)=d(I^*).$
no pure power is contained in $\frac{x^B}{x_1}K.$ Thus $d(I)=d(I^*)$ as claimed.

We may now assume that $b_1=\max\{b_j\}=1$ and $1\le \deg x^B\le 2$. If $\deg x^B=2,$ then $x^B=x_1x_j$ for some $2\le j\le n$. Now $d(I)=d(I^*)$ unless $x_j^{c_j}\in K$ with $c_j\le a_j-2.$ In the latter case $d(I^*)=d(I)-1, A_j(I^*)=A_j(I)-1$ and $A_i(I^*)\le A_i(I)-1$ for all $i\ne j$. Hence
\[
d(I)=d(I^*)+1
\quad\text{and}\quad
N(I^*)+1\le N(I),
\]
which implies (\ref{EQ Difference}) in this case. 

Finally, assume that $\deg x^B=1$, so that $x^B=x_1$. Let $t$ be the number of pure powers $x_\ell^{c_\ell}\in K$ such that $c_\ell<a_\ell$. Notice that $\ell\not=1,$ as $x_1^{a_1}\not \in x_1 K$. Passing from $I$ to $I^*$ we lose exactly $t$ minimal generators, so
\[
d(I)=d(I^*)+t.
\]
Moreover, since the power of $x_1$ decreases by 1 and the other $t$ pure powers decrease by at least 1,  each quantity $A_i$ decreases at least by $t$:
\[
A_i(I^*)\le A_i(I)-t
\qquad\text{for all } i.
\]
In particular
\[
N(I^*)+t\le N(I),
\]
again proving (\ref{EQ Difference}).
\end{proof}

% If $d(I)<d(I^*)$, then there exists a pure power $x_\ell^{c_\ell}\in K$ with $c_\ell<a_\ell$. Note that $2\le \ell\le n$, since $x_1^{a_1}\notin x_1K$. Let $t$ denote the number of such pure powers. After a change of variables we may assume that $\ell=2,\ldots,t+1$ and write
%\[
%I^*=(x_1^{a_1-1}, x_2^{c_2+1}, \ldots, x_{t+1}^{c_{t+1}}, \ldots, x_n^{a_n})+L,
%\]
%where $L$ is minimally generated by $\mu(K)-t$ monomials. Then
%\[
%d(I)=d(I^*)+t
%\quad\text{and}\quad
%A_i(I^*)\le A_i(I)-t
%\]
%for all $i$. Consequently $N(I^*)+t\le N(I)$, and the claim again follows by induction.
%\end{comment}

\begin{theorem}\label{Cmmon}
Let $S=k[x_1,\ldots,x_n]$ be a polynomial ring over a field $k$. If $I$ is a licci $\m$-primary monomial ideal, then $d(I)\le {\rm reg}(S/I).$
\end{theorem}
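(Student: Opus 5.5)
Given Proposition~\ref{TN}, which gives $d(I)\le N(I)$, it suffices to show $N(I)\le \reg(S/I)$. Since $S/I$ is Artinian, $\reg(S/I)$ equals the top socle degree of $S/I$, namely the largest $s$ with $\m^s\not\subset I$. So the plan is to exhibit, for some index $i$, a monomial of degree $A_i=\sum_{j\ne i}(a_j-1)$ that does not lie in $I$. Then $\reg(S/I)\ge A_i\ge N(I)\ge d(I)$.

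To choose the index $i$ and the monomial, write $I=(x_1^{a_1},\ldots,x_n^{a_n})+x^BK$ as in Setting~\ref{S1}. Since $x^B\ne 1$, some $b_i\ge 1$; fix such an $i$. Consider
\[
m=\prod_{j\ne i}x_j^{a_j-1},
\]
which has degree $A_i$. This monomial is not in $(x_1^{a_1},\ldots,x_n^{a_n})$, because every exponent of $m$ is below $a_j$. It is also not in $x^BK$, because $x_i$ does not divide $m$ while $x_i$ divides $x^B$, so every monomial of $x^BK$ is divisible by $x_i$. A monomial lies in a monomial ideal only if it is divisible by one of the monomial generators, so $m\notin I$. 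Hence $\m^{A_i}\not\subset I$, which gives $\reg(S/I)\ge A_i\ge N(I)$.

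Combining this with Proposition~\ref{TN} yields $d(I)\le N(I)\le A_i\le \reg(S/I)$. If $I$ is a complete intersection, $d(I)=0$ and the statement is trivial, so the standard form with $x^B\ne1$ may be used whenever $K\ne0$.

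The only step needing care is the identification of $\reg(S/I)$ with the maximal socle degree for an Artinian graded quotient, which is standard. All the real work is contained in Proposition~\ref{TN}, so I do not expect a substantive obstacle.
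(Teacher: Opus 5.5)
Your proposal is correct and is essentially the paper's argument: the paper uses $b_i\ge 1$ to get $I\subset J=(x_i)+(x_j^{a_j}\,:\,j\ne i)$, hence $\reg(S/I)\ge \reg(S/J)=A_i\ge N(I)$. Your monomial $\prod_{j\ne i}x_j^{a_j-1}$ is exactly the socle generator of $S/J$, so you are verifying that same containment elementwise before invoking Proposition~\ref{TN}.
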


\begin{proof} By Proposition~\ref{TN} it suffices to prove that $N(I)\le {\rm reg}(S/I).$  We may assume that $b_1\ge 1$ in 
%the notation of 
Setting~\ref{S1}. Thus $I\subset  J=(x_1, x_2^{a_2},\ldots,x_n^{a_n})$ and  therefore $${\rm reg}(S/I)\ge {\rm reg}(S/J)=A_1(I)\ge N(I).$$
\end{proof}

\begin{example} We give an example (with the help of David Eisenbud) of a licci $\m$-primary ideal such that
the number of generators is exactly the highest twist at the end of the resolution.  This shows that Theorem \ref{Cmmon} is sharp.
Let $S = k[a,b,c]$, and let $$I: = (a^6,ab^5,b^6,a^5c,ab^4c,a^4c^2,a^3bc^2,a^2b^2c^2,ab^3c^2,a^3c^3,a^2c^4,ac^5,c^6).$$ The
algorithm of \cite{HU07} proves that $I$ is licci, it is equigenerated, and it is $\m$-primary of codimension three. It has 13 generators, and
the highest last twist of the resolution is also 13, as can be seen using Macaulay2 \cite{GS}. 

The licci condition is also certainly necessary; $\m^n$ gives an example in which the number of
generators exceeds the last twist whenever $n\geq 2$ and the number of variables is at least three. \end{example}

Corollary~\ref{Cmon} and Theorem~\ref{Cstrong} generalize Theorem~\ref{Cmmon} though they are,
in fact, consequences of this result. In the proof of Corollary~\ref{Cmon} we need to use the fact that
the licci property specializes:

\begin{theorem}\label{Tspec} Let $S$ be a local Gorenstein ring with infinite residue
field, $I$ be an ideal, and $R=S/(\underline y)$ for a sequence $\underline y$ that 
is regular on $S$ and $S/I.$
%$R$ be a ring obtained from $S$ by factoring out an ideal
%generated by a regular sequence on $S/I.$ 
If $I$ is licci, then so is $IR.$
\end{theorem}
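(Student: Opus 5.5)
By induction on the number $m$ of links needed to pass from a complete intersection to $I$, I will show that \emph{every} ideal in the linkage class of $I$ extends to a licci ideal of $R$. The key point is that links specialize: if $\mathfrak a$ is a complete intersection, $J=\mathfrak a:I$, and $\underline y$ is regular on $S/\mathfrak a$, $S/I$ and $S/J$, then $\mathfrak aR$ is a complete intersection and $JR=\mathfrak aR:IR$, $IR=\mathfrak aR:JR$. This holds because linked ideals are unmixed and, as in the classical work of Peskine--Szpiro, one has $\omega_{S/I}\cong J/\mathfrak a$. Reducing a regular sequence on $S/\mathfrak a$ that is also regular on $S/I$ commutes with this isomorphism. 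Hence $(J/\mathfrak a)\otimes R\cong JR/\mathfrak aR$, using that $\underline y$ is regular on $S/J$. Since $R/IR$ is again Cohen--Macaulay of the right codimension and $\omega_{R/IR}\cong\omega_{S/I}\otimes R$, we get $\mathfrak aR:IR=JR$, and symmetrically $IR=\mathfrak aR:JR$. Also, an ideal is licci iff its localization is, and all ideals here are perfect, so $\underline y$ being regular on $S/I$ already forces $\underline y$ to be regular on $S/L$ for every $L$ in the linkage class of $I$ of the same height. This is the crucial point, and the main obstacle.

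To justify regularity along the whole chain, I would argue as follows. Licci ideals are perfect, and all ideals in a linkage class have the same height $g$. The condition that $\underline y$ is regular on $S/L$ is then equivalent to $\operatorname{ht}(L+(\underline y))=g+\ell(\underline y)$ (and to $\underline y$ being part of a system of parameters modulo $L$). I do not expect this to pass automatically from $I$ to $J$, since $V(J)$ may contain components not in $V(I)$. So the actual plan is to \emph{choose} the links rather than take them as given, using the infinite residue field.

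Concretely, suppose $I=I_0\sim I_1\sim\cdots\sim I_m$ with $I_m$ a complete intersection. I will replace this by a chain that avoids the bad locus. By the standard theory (Huneke--Ulrich, ``The structure of linkage''), $I$ is licci iff it can be linked to a complete intersection using \emph{generic} (or sufficiently general) links. Using the infinite residue field, I will choose the regular sequence $\mathfrak a\subset I$ to be general elements of $I$ such that $\mathfrak a+(\underline y)$ still has height $g+\ell(\underline y)$; this is a prime avoidance argument on the minimal primes of $(\underline y)+\mathfrak a_{<i}$. For the link $J=\mathfrak a:I$, one then gets $V(J)\cap V(\underline y)\subseteq V(\mathfrak a)\cap V(\underline y)$, which has the right codimension, so $\underline y$ is regular on $S/J$ by perfection. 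The minimal number of links needed for $I$ may go up under this replacement, but it remains finite. This is because the licci property is independent of the chosen sequence of general links, which is the result I would cite from that theory.

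With such a chain, the first paragraph gives a chain $IR=I_0R\sim I_1R\sim\cdots\sim I_mR$. Here $I_mR$ is a complete intersection, because $\underline y$ is regular on $S/I_m$ and a complete intersection reduces to one. Hence $IR$ is licci.
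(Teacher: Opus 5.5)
\paragraph{The load-bearing step is not proved.} You rightly see that an arbitrary chain of links from $I$ need not specialize. In fact the assertion at the end of your first paragraph is false. In $S=k[[x,y,z]]$ take $I=(x,y)$, $\mathfrak a=(x,yz)$, $J=\mathfrak a:I=(x,z)$ and $\underline y=z$. Then $z$ is regular on $S/I$ but not on $S/J$, and $\mathfrak aR=(x)$ has height one in $R=S/(z)$.

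What you then need is a chain $I=I_0\sim I_1\sim\cdots\sim I_t$ ending in a complete intersection with $\operatorname{ht}(\mathfrak a_i+(\underline y))=g+\ell(\underline y)$ at every step. You obtain it only by appeal to an unspecified fact that ``the licci property is independent of the chosen sequence of general links.'' For your intersection-of-open-conditions argument to work, you need a statement of this form: there is a $t$ such that \emph{every} chain ends in a complete intersection, provided each $\mathfrak a_i$ is generated by combinations of generators of $I_i$ whose coefficients have residues in a suitable nonempty Zariski-open set (depending on the earlier choices). An existence statement (``$I$ can be linked to a complete intersection by general links'') does not help, because the chain it produces need not satisfy your extra height condition. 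This ``for all general choices'' statement is where all the difficulty lies. The natural way to get it is through universal links: $L^t(I)$ is a complete intersection by \cite{HU87}*{Theorem 2.17(b)}. One would then need an argument that this property survives general specialization of the generic coefficients, one step at a time. None of this is supplied.

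\paragraph{How the paper avoids this.} Once universal links are in play, the detour through chosen links is unnecessary, and this is how the paper argues. Since $\underline y$ is regular on $S/I$, the ideal $IR$ has grade $g$. So the generic combinations $\underline\alpha$ of the generators of $I$ form a regular sequence modulo $\underline y$. Thus $\underline y$ is automatically regular modulo each of the generic complete intersections, with no choices to make. Hence $L^t(I)\otimes_S R$ is a universal link of $IR$ by \cite{HU85}*{Lemma 2.12}, and it is a complete intersection. The infinite residue field is used only once, in $R$, to conclude from \cite{HU88}*{Theorem 2.4} that $IR$ is licci.

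\paragraph{Your one-link lemma.} Your specialization lemma for a single link is fine; your dimension count shows that regularity of $\underline y$ on $S/\mathfrak a$ alone suffices. One step should be repaired: $\mathfrak aR:IR=JR$ does not follow from an abstract isomorphism of the two modules. Derive it instead from surjectivity of the natural map $\operatorname{Hom}_{S/\mathfrak a}(S/I,S/\mathfrak a)\otimes_S R\to\operatorname{Hom}_{R/\mathfrak aR}(R/IR,R/\mathfrak aR)$. That surjectivity follows from $\operatorname{Ext}^1_{S/\mathfrak a}(S/I,S/\mathfrak a)=0$ by induction on the length of $\underline y$.
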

\begin{proof} %First, notice that the sequence $\underline{y}$ is also regular on $S.$ 
Since $I$ is licci, a $t^{\rm th}$ universal link $L^t(I)$ is a complete intersection
for some $t \geq 0,$ see \cite{HU87}*{Theorem 2.17(b)}. The sequence $\underline y$ is regular
modulo each of the complete intersections defining the $t$ successive universal links and therefore, 
since $S/I$ is Cohen-Macaulay, $L^t(I)\otimes_SR$ is a universal link of $IR$ and a complete
intersection, see \cite{HU85}*{Lemma 2.12}. As the residue field $R$ is infinite, this implies 
that $IR$ is licci according to \cite{HU88}*{Theorem 2.4}.
\end{proof}

\begin{corollary}\label{Cmon}
Let $S=k[x_1,\ldots,x_n]$ be a polynomial ring over a field $k$ and $I$ be a licci monomial ideal of height $g.$ If $I$ contains a monomial regular sequence of length $g,$ then $\, d(I)\le {\rm reg}(S/I).$
\end{corollary}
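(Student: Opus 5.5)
The plan is to reduce to the $\m$-primary case of Theorem~\ref{Cmmon} by specializing along a regular sequence of linear forms. The natural route is polarization followed by specialization, but a more direct route seems available.

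Let $x_{i_1}^{c_1},\dots$ be the monomial regular sequence of length $g$ contained in $I$. A regular sequence of monomials has pairwise coprime terms. Since $I$ is licci, it is unmixed of height $g$ and Cohen--Macaulay. Every associated prime of $I$ is a monomial prime containing the radical of the regular-sequence ideal, and has height $g$. I will try to show that all variables occurring in the regular sequence can be collected so that each associated prime is generated by one variable from the support of each of the $g$ monomials.

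A cleaner approach is to first polarize. Polarization preserves the licci property: it is a deformation, and the original ideal is recovered by specializing along a regular sequence of linear forms $x_{i,j}-x_i$. One would still need the converse, which is harder. So instead I will work directly with the given $I$.

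The main step is to find linear forms $\underline y$, of length $n-g$, that are regular on $S/I$, and whose quotient $R=S/(\underline y)$ is again a polynomial ring in $g$ variables in which $IR$ is a monomial ideal. The natural choice is $y$'s of the form $x_i - x_j$, identifying variables. For $IR$ to be $\m$-primary, we need the images of the $g$ regular-sequence monomials to remain a regular sequence. I would attempt the following choice. Choose variables $z_1,\dots,z_g$ with $z_k$ dividing the $k$-th monomial $f_k$; these supports are disjoint. Then map each remaining variable to a suitable $z_k$, or to one of them, so that each $f_k$ becomes a power of $z_k$.

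Verifying that these differences form a regular sequence on $S/I$ is the key obstacle. I would handle it by localizing at the homogeneous maximal ideal and checking that $\dim S/(I,\underline y)=0$, which suffices because $S/I$ is Cohen--Macaulay of dimension $n-g$. Since $\sqrt{(I,\underline y)}\supseteq(f_1,\dots,f_g,\underline y)$ and the images of the $f_k$ are powers of $z_k$, the quotient is zero-dimensional.

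Next, since regularity is checked on homogeneous localization, Theorem~\ref{Tspec} applied to $S_\m$ (after extending $k$ to an infinite field, which changes none of the invariants) shows that $IR_\m$ is licci. By \cite{HU07}*{Theorem 2.6}, $IR$ is then a licci $\m$-primary monomial ideal in $g$ variables. Theorem~\ref{Cmmon} then gives $d(IR)\le \reg(R/IR)$.

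Finally, I compare invariants. Going modulo a regular sequence of linear forms preserves the graded Betti numbers, so $\reg(R/IR)=\reg(S/I)$ and $\mu(IR)=\mu(I)$. The height also goes from $g$ to $g$, so $d(IR)=d(I)$, and the result follows.
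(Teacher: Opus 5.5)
Your proposal is correct and is essentially the paper's proof. It specializes $S$ onto a polynomial ring in $g$ variables by a substitution that collapses the support of the $k$-th regular-sequence monomial to a single variable. It then uses that $S/I$ is Cohen--Macaulay and the specialization is zero-dimensional, so the kernel is generated by a regular sequence on $S$ and $S/I$ and hence $\mu$, height and regularity are unchanged. Finally it transfers the licci property via Theorem~\ref{Tspec} and \cite{HU07}*{Theorem 2.6} before invoking Theorem~\ref{Cmmon}.

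The only difference is cosmetic: the paper sends the variables outside all supports to $0$ rather than identifying them with some $z_k$. Your opening digressions (pure powers, associated primes, polarization) are not needed.
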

\begin{proof} We may harmlessly assume that $k$ is infinite. Let $\alpha_1, \ldots, \alpha_g$ be a monomial regular sequence contained in
$I.$ Since the monomial supports of $\alpha_1, \ldots, \alpha_g$ are pairwise disjoint,
there is a surjective map of $k$-algebras $\pi: S \rightarrow R:=k[x_1, \ldots, x_g]$ sending the variables in the monomial support of each $\alpha_i$ to $x_i$ and
all the other variables to $0.$ Then $\pi(I)$ is a monomial ideal containing powers 
of the variables $x_1, \ldots, x_g.$ It follows that $\, {\rm ht} \, \pi(I)= {\rm ht} \, I$ and
so, since $S/I$ is Cohen-Macaulay, ${\rm ker \, \pi}$ is generated by a sequence of
homogeneous elements that are regular on $S$ and on $S/I.$ Therefore $d(\pi(I))=d(I)$ and,
since $S/I$ is Cohen-Macaulay,
$ {\rm reg}(R/\pi(I))={\rm reg}(S/I).$ Moreover, the ideal $\pi(I)_{(x_1, \ldots, x_g)}$ is licci by Theorem~\ref{Tspec}, and so $\pi(I)$ is licci by \cite{HU07}*{Theorem 2.6}.
%, as can be seen 
%by combining \cite[Theorem 2.17(b)]{HU87}, \cite[Proposition 2.8]{HU87}, and \cite[Theorem 2.4]{HU88}. 
Applying Corollary~\ref{Cmmon} to the zero-dimensional monomial ideal $\pi(I)$
now gives the assertion.
\end{proof} 

\begin{remark}\label{symbolic} The condition that a monomial ideal $I$ of height $g$ contains a
monomial regular sequence of length $g$ is satisfied
if the ideal is square-free and $I^{(g)} = I^g,$ i.e. the $g^{\rm th}$-symbolic power of $I$ is equal to $I^g$. One can see this as follows: consider the product
$m$ of all the variables in the ring. We claim that $m\in I^{(g)}.$ Since the ideal
$I$ is square-free, it is the intersection of prime ideals 
%$I$ is generated by square-free monomials, every associated prime of $I$ is minimal and is 
of the form $P = (x_{i_1},\ldots,x_{i_k}),$ where $k\geq g.$ Since $m$ is the
product of all the variables in the polynomial ring, clearly $m\in P^g$ for all
such $P$, and this forces $m\in I^{(g)}.$ Our assumption then gives that $m\in I^g$,
and hence we can write $m = m_1\cdots m_g,$ where each $m_i\in I.$ The supports of the
$m_i$ must be disjoint since $m$ is squarefree, so that $m_1,\ldots, m_g$ form a regular sequence
of monomials in $I$. This argument is well-known in the study of the so-called
packing problem, see \cite{GRV}.
\end{remark}

\begin{theorem}\label{Cstrong}
Let $S=k[x_1,\ldots,x_n]$ be a polynomial ring over a field $k$ and $I$ be a homogeneous  ideal of height $g$. If the initial ideal of $I$ with respect to some term order is licci and contains a monomial regular sequence of length $g,$ then $\, d(I)\le {\rm reg}(S/I).$
\end{theorem}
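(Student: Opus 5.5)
Let $J=\mathrm{in}(I)$ be the initial ideal with respect to the given term order. The plan is to apply Corollary~\ref{Cmon} to $J$ and then transfer the inequality back to $I$. This uses three standard facts about Gröbner degeneration:
\begin{itemize}
\item $\mathrm{ht}\, J=\mathrm{ht}\, I = g$, since $S/I$ and $S/J$ have the same Hilbert function.
\item $\mu(I)\le \mu(J)$. A Gröbner basis of $I$ with leading terms equal to the minimal monomial generators of $J$ generates $I$, which gives this bound (minimal generation of $I$ need not come from minimal generation of $J$).
\item $\mathrm{reg}(S/J)\ge \mathrm{reg}(S/I)$. The graded Betti numbers can only go up under passing to the initial ideal, by upper semicontinuity along the flat Gröbner family.
\end{itemize}
Combining these gives $d(I)\le d(J)$. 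However, the regularity inequality points the wrong way for a naive chain, so the crux is to show that in our situation $\mathrm{reg}(S/I)=\mathrm{reg}(S/J)$.

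\textbf{Step 1.} Since $J$ is licci, it is perfect, so $S/J$ is Cohen--Macaulay. By semicontinuity of depth (or simply because $S/I$ and $S/J$ share a Hilbert function and depth can only drop under degeneration), $S/I$ is Cohen--Macaulay of the same dimension.

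\textbf{Step 2.} For a Cohen--Macaulay graded quotient, regularity is read off from the Hilbert series. Write
\[
H_{S/I}(t)=\frac{h(t)}{(1-t)^{n-g}}
\]
with $h(1)\neq 0$. Then $\mathrm{reg}(S/I)=\deg h$. Indeed, after modding out a linear regular sequence of length $n-g$ (available after extending $k$ to be infinite, which changes neither side), we reach an Artinian ring whose regularity is its top socle degree, namely $\deg h$. Since $S/I$ and $S/J$ have equal Hilbert series and both are Cohen--Macaulay, $\mathrm{reg}(S/I)=\mathrm{reg}(S/J)$.

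\textbf{Step 3.} $J$ is a licci monomial ideal of height $g$ containing a monomial regular sequence of length $g$, so Corollary~\ref{Cmon} gives $d(J)\le \mathrm{reg}(S/J)$. Therefore
\[
d(I)=\mu(I)-g\le \mu(J)-g=d(J)\le \mathrm{reg}(S/J)=\mathrm{reg}(S/I).
\]

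The main point needing care is Step 2, the equality of regularities. The general inequality $\mathrm{reg}(S/I)\le\mathrm{reg}(S/J)$ is insufficient on its own, and the Cohen--Macaulay property of both quotients is what forces equality. I would justify the Cohen--Macaulayness of $S/I$ via the standard result that $\mathrm{depth}\, S/I\ge \mathrm{depth}\, S/\mathrm{in}(I)$. The inequality $\mu(I)\le\mu(J)$ must also be stated with attention to homogeneity: $I$ is homogeneous and a reduced Gröbner basis consists of homogeneous elements, so the count of minimal generators is bounded correctly.
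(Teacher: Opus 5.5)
Your proposal is correct and follows essentially the same route as the paper. You apply Corollary~\ref{Cmon} to ${\rm in}(I)$, use $d(I)\le d({\rm in}(I))$, and obtain ${\rm reg}(S/I)={\rm reg}(S/{\rm in}(I))$ from the equal Hilbert series once both quotients are Cohen--Macaulay; you also justify the Cohen--Macaulayness of $S/I$ via $\operatorname{depth} S/I\ge \operatorname{depth} S/{\rm in}(I)$ more explicitly than the paper's brief remark does.
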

\begin{proof}Apply Corollary~\ref{Cmon} to
the initial ideal $\, {\rm in}(I)$ and use
the fact that $d(I) \leq d({\rm in}(I))$
and ${\rm reg}(S/I)={\rm reg}(S/{\rm in}(I));$ the last equality holds because $S/I$ and $S/{\rm in}(I)$ are Cohen-Macaulay, as can be seen from the Hilbert series.
\end{proof}

\section{Ideals of small Loewy colength}

In this section we will prove the following result:

\begin{theorem}\label{mcubed} Let $(R,\m)$ be a regular local ring containing a field and
$I$ be
an $\m$-primary licci ideal that contains $\m^s$.  If $s\leq 3$, then the deviation of $I$ is at most
$s-1$. 
\end{theorem}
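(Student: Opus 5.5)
The plan is to reduce to the case where $I$ contains no element of $\m\setminus\m^2$. After that, a lower bound on the Loewy length of licci ideals of order at least $2$, due to Huneke and Ulrich, forces the embedding dimension of the reduced ring to be at most $s$. What is left is a short list of Artinian ideals in at most three variables, to be handled by hand.

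\emph{Step 1: reduction.} Choose a minimal generating set of $I$ so that $x_1,\ldots,x_r\in I$ form part of a regular system of parameters and $I=(x_1,\ldots,x_r)+I_1$ with $I_1\subset\m^2$. Set $R'=R/(x_1,\ldots,x_r)$, with maximal ideal $\m'$, and $I'=IR'$. Then $\dim R'=n-r=:n'$ and $\mu(I')=\mu(I)-r$, so $d(I')=d(I)$. Also $I'\supseteq\m'^{s}$. The ideal $I'$ is licci in $R'$ because linkage commutes with factoring out a regular sequence contained in the ideal: a complete intersection link of $I$ can be chosen to contain $x_1,\ldots,x_r$, and a universal-link argument as in Theorem~\ref{Tspec} also works. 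Since $I'\subset\m'^2$, if $I'$ is not a complete intersection we invoke the local form of the Huneke--Ulrich inequality $D>(g-1)\,{\rm indeg}(I)$ from \cite{HU87}: for a licci $\m'$-primary ideal of order $\ge 2$ one has $\m'^{\,n'-1}\not\subset I'$. With $\m'^s\subset I'$ this gives $n'\le s$.

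\emph{Step 2: small cases.} If $s=1$, then $I=\m$ and $d(I)=0$. If $n'\le 2$, then $I'$ is a perfect ideal of height $2$ (every such ideal is licci). The Hilbert--Burch theorem gives $\mu(I')\le {\rm ord}(I')+1\le s+1$, hence $d(I)\le s-1$. This settles $s=2$ completely, with equality for $I'=\m'^2$ in two variables. It also settles $s=3$ whenever $n'\le 2$.

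\emph{Step 3: the main case, $s=3$ and $n'=3$.} Here $\m'^3\subset I'\subset\m'^2$, and we must show $\mu(I')\le 5$. First, $I'=\m'^3$ is impossible: it has order $3$, and the bound in Step~1 sharpens with the order, requiring Loewy length of order $(g-1)\cdot 3-g+1=4>3$. So $I'$ contains a quadric, and $\mu(I')=\mu_2+\mu_3$ counts generators of $I'$ in orders $2$ and $3$. I would attack this by linkage. Take a general complete intersection $\mathfrak a\subset I'$ generated by elements of order $2$ or $3$ (the residue field may be taken infinite). Set $J=\mathfrak a:I'$; then $\mu(I'/\mathfrak a)$ equals the Cohen--Macaulay type of $R'/J$, and $\lambda(R'/J)=\lambda(R'/\mathfrak a)-\lambda(R'/I')$. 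Since the Hilbert function of $R'/I'$ is $(1,3,h_2)$ with $h_2\le 6-\mu_2$, this bounds $J$ tightly. The aim is to show that if $\mu(I')\ge 6$, then $J$, which is again licci, either contains a linear form that can be split off as in Step~1 or violates the Huneke--Ulrich bound, so that induction on length applies. Controlling this step, that is, ruling out Artinian licci ideals in three variables with Hilbert function $(1,3,h_2)$ and six or more generators, is where I expect the real difficulty. If the linkage bookkeeping fails, the fallback is to pass to a graded model (initial or associated graded ideal) and use that there the last shift satisfies $D\le {\rm reg}+g=5$.
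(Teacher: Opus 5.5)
Your Steps 1 and 2 match the paper's reductions: split off linear forms via Theorem~\ref{Tspec}, bound $n$ by the Huneke--Ulrich inequality, and use Hilbert--Burch for $n\le 2$. One adjustment: the paper does not rely on a local version of that inequality for arbitrary ideals of order $\ge 2$. Instead it notes that $\m^3\subset I\subset \m^2$ makes $I$ extended from a homogeneous ideal, whose localization is licci by faithful flatness, so the graded criterion applies.

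The genuine gap is Step 3, which is the whole substance of the theorem and for which you give only a plan. Worse, the tools you name cannot carry it out. For $n=3$ and $\m^3\subset I\subset \m^2$ with $I\neq \m^3$, one has ${\rm indeg}(I)=2$. Since $\m^2\not\subset I$, the last shift is $D=5>4=(g-1)\,{\rm indeg}(I)$. So the Huneke--Ulrich inequality holds for \emph{every} ideal in this class and excludes none of them, e.g.\ neither $(q)+\m^3$ ($\mu=8$) nor $(x^2,y^2,xy,xz,yz,z^3)$ ($\mu=6$). Minimal-degree links do not help either. The ideal $(q)+\m^3$ is self-linked by a general $(q,c_1,c_2)$. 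The ideal $(x^2,y^2,xy,xz,yz,z^3)$ is linked by $(x^2,y^2,z^3)$ to $(x,y)^2+z^2\m$, which has larger colength $7$ and lies in the same class. So no induction on length gets started. Your fallback is circular: in the graded model $D=5$, and $\mu(I)\le 5=D$ is exactly the conjecture being proved.

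What is missing is a sharper obstruction to the licci property. The paper uses two:
\begin{itemize}
\item Buchweitz's theorem that $\lambda(I/I^2\otimes_R\omega_{R/I})=3\lambda(R/I)$ for licci $I$, which yields $\mu(I)\,r(R/I)\le 3\lambda(R/I)$;
\item Poonen's classification of algebras of length at most $6$, where licci forces deviation $\le 2$, and type $\le 2$ except for $k[x,y]/(x,y)^3$.
\end{itemize}
With $t=\dim_k (I+\m^3)/\m^3$, the argument runs as follows.
\begin{itemize}
\item The inequality excludes $t=1$ ($8\cdot 5>27$), $t=2$ with a common linear factor ($7\cdot 4>24$), and $t=3$ with quadrics of height one ($7\cdot 4>21$).
\item For $t=2$ with $q_1,q_2$ a regular sequence, the link $(q_1,q_2,c):I$ equals $\m^2$, which is not licci.
\item For $t=3$ with quadrics of height $\ge 2$, linking by a complete intersection $L$ of degrees $2,2,3$ gives a licci ideal of colength $5$. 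Its type $\mu(I/L)\ge d(I)$ is at most $2$ by the table.
\item For $t\ge 4$ one has $\lambda(R/I)\le 6$, so the table gives $d(I)\le 2$ directly.
\end{itemize}
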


In the course of the proof of this theorem, we will use the classification due to Poonen, of {\it all} zero-dimensional local algebras containing an algebraically closed residue field and having length at most $6$, up to
analytic isomorphism \cite{Poonen2008}. At the end of this section, we reproduce his table of
examples for the convenience of the reader.  We also calculated the additional
information we need on whether or not they are licci, their deviation, and their type.  

\begin{remark}\label{Poonen} Of critical importance for our proof is the observation that
for the local rings in Poonen's classification, the defining ideal is licci if and only if its deviation is at most $2.$
%if such an ideal (the defining ideal of the algebra) is licci then its deviation is at most $2$, 
Moreover the licci property implies that the type is at most $2$ except in one
case, when the algebra is $k[x,y]/(x,y)^3$, which has length $6$, deviation $2$, and
type $3$. 
\end{remark}

\noindent
{\em Proof of Theorem~\ref{mcubed}.}   If $s = 1$ the result is obvious so we assume that $s\geq 2$. We may complete $R$ without changing the fact that the extension of $I$ to the completion is licci and
contains $\m^s$. We may then choose a coefficient field and further assume the field is algebraically closed by making a faithfully flat extension. Again,
the extension of $I$ to this new ring is still licci and contains the same power of the new maximal ideal. Thus we may assume that
$R = k[[x_1,\ldots, x_n]]$, where $k$ is an algebraically closed field.

Assume first that $I$ is not contained in $\m^2.$ We make a change of variables to
write $I = (x_1, x_2,...,x_p, J)$ where $p: = \dim_k(\frac{I+\m^2}{\m^2}),$ and $J$ is
a finite colength ideal of the power series ring $k[[x_{p+1},\ldots, x_n]]$ that contains the $s^{\rm th}$ power of the maximal ideal of this smaller power series ring, and is contained in the square of the maximal ideal of this ring. By  Theorem \ref{Tspec} 
%\cite{HU07}*{Proposition 2.3}, $JR$ is licci in the ring $R$ as well, and then by 
%\cite{HU88}*{Theorem 2.12} it follows that 
the ideal $J$ is licci in $k[[x_{p+1},\ldots, x_n]].$ It suffices to prove that the deviation of $J$ is at most $s-1.$  By renaming our ideal and ring, we may assume that $I\subset \m^2$, and $R = k[[x_1,\ldots, x_n]].$

Assume that $s = 2.$ Then $I = \m^2.$ However, $\m^2$ is never licci if the
number of variables $n$ is at least $3,$ e.g. by the criterion in \cite{HU87}*{Corollary 5.13(a)}. Therefore, $n\leq 2,$ and in this case the deviation of $I$ is at most $1.$ 

We may now assume that $s = 3,$ but $I \neq \m^2.$ Since $\m^3\subset I\subset \m^2,$ we can write $I = (q_1,\ldots,q_t)+\m^3,$ where
$t: = \dim_k(\frac{I+\m^3}{\m^3}),$ and the $q_i$ may be assumed to be homogeneous
polynomials in $x_1, \ldots ,x_n.$  In particular, $I$ is extended from a homogeneous
ideal in the polynomial ring $S: = k[x_1, \ldots ,x_n].$ Call this ideal $J$, so that
$JR = I$. Let $T$ be the localization of $S$ at the maximal ideal $(x_1, \ldots ,x_n)S.$
Since the map from $T$ to $R$ is faithfully flat, \cite{HU88}*{Theorem 2.12} shows that
$JT$ is licci in $T$. We now use the criterion of \cite{HU87}: As a graded $S$-module, $S/J$ has a
minimal homogeneous free resolution, the minimal degree of a generator of $J$ is $2$, while the
maximal shift at the end of the resolution is $n+2$ since the socle of $S/J$
contains all homogeneous elements of degree $2.$ Since $JT$ is licci,  \cite{HU87}*{Corollary 5.13} shows that $$2(n-1) < n+2,$$ which implies that $n\leq 3$. We can assume that
$n = 3$ since the theorem is true if $n = 2$ by using the Hilbert-Burch theorem: any ideal containing the cube of the maximal ideal in a $2$-dimensional regular local ring has at most four generators and therefore has deviation at most $2$. For the remainder of the proof
we assume that $R = k[[x,y,z]]$.

%For the remainder of the proof we 
Now let $\lambda$ be the length of $R/I$,
$r$ the type of $I$, $\mu$ the minimal number of generators of $R/I$, and $d$ the deviation of $I$, namely $d = \mu - 3$.  Because $I$ is licci, Buchweitz \cite{Buc81} proved that the length of
the twisted conormal module, $\lambda(I/I^2\otimes_R \omega_{R/I}),$ is exactly
$3\lambda$, where $\omega_{R/I}$ is a canonical module of $R/I$. In particular,
\begin{equation}\label{conormal}\mu\cdot r\leq 3\lambda,
\end{equation}
since the number of generators of the twisted conormal
module is at most its length.

We proceed by examining the possible values of $t;$ recall that
$t: = \dim_k(\frac{I+\m^3}{\m^3}).$ As a side benefit of this analysis, we will prove that if $t\leq 2$ then $I$ cannot be licci.

Case $t = 0$.  In this case $I= \m^3,$ which, as we saw before, cannot be licci.
%$I = (x,y,z)^2$. However, it is well-known, e.g. using \cite[Corollary 5.13]{HU87} that this ideal is not licci.

Case $t = 1$. In this case there is a single quadric $q$ in $I$ up to scalar multiples,
and $I = (q,\m^3)$. We claim that $\lambda = 9$, $r\geq 5$, and $\mu = 8$.
The first calculation is clear.  The second follows since the entire square of the
maximal ideal is in the socle of $R/I$, and the dimension of the socle is the type.
The third follows since the span of $q$ in the space of cubics is $3$-dimensional, and $\m^3$
is minimally generated by $10$ cubics. Since $8\cdot 5$ is clearly not less than or equal to $3\cdot 9$,
(\ref{conormal}) proves that no such $I$ is licci. 

Case $t = 2$. We write $I = (q_1,q_2) + \m^3$.  If $q_1,q_2$ do not form a regular sequence,
then they have a common linear factor. In this case, the span of these elements in the
space of cubics has dimension
%$\m^3$ is 
at most $5$, since there is a linear relation among the two elements.
Therefore $\lambda = 8$, $r\geq 4$, and $\mu\geq 7$. Since $I$ is licci, $7\cdot 4\leq 3\cdot 8$ by (\ref{conormal}), a contradiction. Thus no such $I$ is licci.
It follows that we may assume $q_1$ and $q_2$ form a regular sequence, which we complete
to a maximal regular sequence $q_1,q_2,c$ in $I,$ where $c$ is a cubic. Let $L = (q_1,q_2,c).$
%generating the ideal $L = (q_1,q_2,c)$ where $c$ is a cubic. 
We claim that $L:I =\m^2$. Since $\m^2$ is not licci, this proves that
no $I$ with $t = 2$ is licci. To prove the claim, observe that $\m^5\subset L$, since
the socle of $R/L$ sits in degree $4= 1+1+2$. It follows that $\m^2I\subset L$ as $L$
already contains $q_1,q_2$. But the colength of $\, L:I \,$ is $\lambda(R/L) - \lambda(R/I) =
12-8 = 4 = \lambda(R/\m^2).$ So the claim follows.

Case $t = 3$. In this case $\lambda = 1+3+3 = 7.$ Write $I = (q_1,q_2,q_3) +\m^3$.  First suppose that the height of
the ideal $(q_1,q_2,q_3)$ is one.  Then the quadrics $q_1, q_2, q_3$ have a common linear factor, and there
are three linearly independent linear relations among them.  Moreover, the common linear factor will be in the socle, as will $\m^2/I$. Thus the type is $r\ge 4,$ and the minimal number of
generators satisfies $\mu\geq 3 + (10-6) = 7.$ 
%We also have that $\lambda = 1+3+3 = 7.$ 
Since $I$ is licci and $\lambda =7,$ we have $7\cdot 4\leq 3\cdot 7,$ a contradiction.  Thus
no such ideal is licci.  

We have shown that the height of the ideal generated by the quadrics in $I$ is at 
least $2$.
%If their height is three, then the quadrics form a regular sequence,
%and consequently have no linear relations.  It follows that the ideal $I$ is generated by these three quadrics together with one cubic.  This is an almost
%complete intersection which is licci, and the deviation is one.
%We have reduced to the case where the three quadrics generate an ideal of height two. We claim that $\mu =6.$ We apply \cite{CM}*{Theorem 2}, which proves the Eisenbud-Green conjecture in this case: there is a lex-plus powers ideal $(x^2,y^2,z^3) + J$ with the same Hilbert function as $I,$ where
%$J$ is a lex ideal.   Necessarily
%$J = (x^2,xy,y^2) + \m^3$, which has six generators. Thus, $I$ has exactly six generators. 
Choose a regular sequence consisting of forms of degrees $2,2,3$ contained in $I,$ and let
$L$ be the ideal these forms generate. The ideal $L:I$ is licci and has colength $\lambda(R/L) - \lambda(R/I) =
12-7 =5$. From Remark \ref{Poonen} it follows that the type of $R/(L:I)$ is at most 2. However, the type of this ring is the minimal number of generators of $I/L$, which in turn is at least $d(I).$ %which is at least three, a contradiction. Thus no such ideal is licci.
Thus $d(I)\le 2.$

Case $t \geq 4.$  In this case, 
%the length of $R/I$ is at most $6$. However, if 
$\lambda(R/I)\leq 6.$ So Remark \ref{Poonen} shows that the deviation of $I$ is at most
$2$, proving the theorem in this case. 
\qed

\vspace{.3cm}

To finish, we give Poonen's table
\footnote{Three of Poonen's examples of colength 6 have a redundant generator $z^4,$ which we have removed in our table. A $^*$ indicates an example in characteristic $2$ and a $^{**}$ indicates an example in characteristic $3$. The table we use is from a corrected version of the original paper; there were a few
changes Poonen made in some of the characteristic $2$ examples.} with the added information we need; here
$R$ is a power series ring over an algebraically closed field in the
variables $x, y, \ldots :$

\begin{longtable}{@{}>{\small $}c<{$} c c c c@{}}
\caption{Ideals appearing in the classification.}
\label{tab:ideals} \\
%\toprule
%\multicolumn{1}{c}{$I$} & Licci & $d(I)$ & $r(I)$ \\
%\midrule
%\endfirsthead

%\toprule
\multicolumn{1}{c}{$I$} & $\lambda(R/I)$ & Licci & $d(I)$ & $r(I)$ \\
\toprule
%\midrule
%\endfirsthead

%\bottomrule
%\endfoot

(0) & 0& YES & 0& 1 \\
(x^2) & 2& YES & 0& 1\\
(x^3) &3 & YES& 0& 1\\
(x,y)^2 &3& YES& 1& 2\\
(x^4) &4& YES& 0& 1\\
(x^2,xy,y^3) &4&YES & 1& 2\\
(x^2,y^2) &4& YES& 0& 1\\
^*(x^2+y^2,xy) &4&YES & 0&1 \\
(x,y,z)^2 &4& NO & 3& 3\\
(x^5) & 5 &YES& 0& 1\\
(x^2,xy,y^4) &5& YES& 1& 2\\
(x^2+y^3,xy) &5& YES& 0& 1\\
(xy,x^3,y^3) &5& YES& 1& 2\\
(x^2,xy^2,y^3) &5& YES& 1& 2\\
(x^2,y^2,xy,xz,yz,z^3) &5&NO & 3& 3\\
(x^2,y^2,z^2,xy,xz) &5& YES& 2& 2\\
^*(x^2,xy,xz,yz,y^2+z^2) &5&YES & 2& 2 \\
(xy,xz,yz,x^2+y^2,x^2+z^2) &5&YES & 2 & 1\\
(x,y,z,w)^2 &5& NO & 6& 4\\
(x^6) &6& YES& 0& 1\\
(x^2,xy,y^5) &6& YES& 1& 2\\
(x^2+y^4,xy) &6& YES& 0& 1\\
(xy,x^3,y^4) &6&YES & 1& 2\\
(xy,x^3+y^3) &6& YES& 0& 1\\
(x^2,xy^2,y^4) &6& YES& 1& 2\\
(x^2+y^3,xy^2,y^4) &6& YES& 1& 2\\
(x^2,y^3) &6& YES& 0& 1\\
^*(x^2+xy^2,y^3) &6&YES & 0& 1\\
^{**}(x^2,xy^2+y^3) &6& YES&0 & 1\\
(x,y)^3 &6&YES & 2& 3\\
(x^2,xy,y^2,xz,yz,z^4) &6& NO &3 &3 \\
(x^2,xy,y^2+z^3,xz,yz) &6& YES& 2& 1\\
(x^2,xy+z^3,y^2,xz,yz) &6& YES& 2& 1\\
^*(x^2+z^3,xy,y^2+z^3,xz,yz) &6& YES& 2& 1\\
(xy,yz,z^2,y^2-xz,x^3) &6& YES& 2& 2\\
(xy,z^2,xz-yz,x^2+y^2-xz) &6& YES& 1& 2\\
^*(x^2,z^2,y^2-xz,yz) &6& YES& 1& 2\\
(x^2,xy,xz,y^2,yz^2,z^3) &6& NO& 3& 3\\
(x^2,xy,xz,yz,y^3,z^3) &6& NO& 3& 3\\
(xy,xz,y^2,z^2,x^3) &6& YES& 2& 2\\
^*(xy,xz,yz,y^2-z^2,x^3) &6 &YES & 2& 2\\
(xy,xz,yz,x^2+y^2-z^2) &6& YES& 1& 2\\
(x^2,xy,yz,y^2-z^2) &6& YES & 1& 2\\
^*(x^2,xy,yz,xz+y^2-z^2) &6& YES& 1& 2\\
(x^2,xy,y^2,z^2) &6&YES &1 & 2\\
^*(x^2,xy,y^2,z^2-xz) &6& YES& 1& 2\\
(x^2,y^2,z^2,xy,xz,xw,yz,yw,zw,w^3) &6&NO & 6& 4\\
(x^2,y^2,z^2,w^2,xy,xz,xw,yz,yw) &6& NO& 5& 3 \\
^*(x^2,y^2,z^2+w^2,xy,xz,xw,yz,yw,zw) &6& NO& 5& 3\\
(x^2,y^2+z^2,y^2+w^2,xy,xz,xw,yz,yw,zw)&6 & NO& 5& 1\\
(x^2,y^2,z^2,w^2,xy-zw,xz,xw,yz,yw) &6& NO & 5& 1\\
^*(x^2+y^2,x^2+z^2,x^2+w^2,xy,xz,xw,yz,yw,zw) &6& NO & 5& 1\\
(x,y,z,w,v)^2 &6& NO& 10& 5\\

\end{longtable}

\bibliography{references}

\end{document}